\newcommand{\qedclaim}{\hfill $\diamond$ \medskip}
\theoremstyle{plain}
\newtheorem{theorem}{Theorem}
\newtheorem{lemma}[theorem]{Lemma}
\theoremstyle{definition}
\newtheorem{conjecture}[theorem]{Conjecture}
\newtheorem{problem}[theorem]{Problem}
\begin{document}

\title{A proof of the Erd\H{o}s-Sands-Sauer-Woodrow conjecture}

\author{Nicolas Bousquet$^a$, William Lochet$^{b,c}$, and St\'{e}phan Thomass\'{e}$^b$\\~\\
\small $^a$Laboratoire G-SCOP\\ \small 46, avenue Félix Viallet
\\ \small 38031 Grenoble Cedex 1 - France\\~\\
\small $^b$Laboratoire d'Informatique du Parall\'elisme \\ \small   UMR 5668 ENS Lyon - CNRS - UCBL - INRIA
\\ \small Universit\'e de Lyon, France\\~\\
\small $^c$ Universit\'e C\^ote d'Azur, CNRS, I3S, UMR 7271, Sophia Antipolis, France}

\date{}

\maketitle
\begin{abstract}
A very nice result of B\'ar\'any and Lehel asserts that every finite subset $X$ or $\mathbb R^d$ can be covered 
by $f(d)$ $X$-boxes (i.e. each box has two antipodal points in $X$). As shown by Gy\'arf\'as and 
P\'alv\H{o}lgyi this result would follow from the following conjecture : If a tournament admits 
a partition of its arc set into $k$ quasi orders, then its domination number is bounded in terms of $k$.
This question is in turn implied by the Erd\H{o}s-Sands-Sauer-Woodrow conjecture : If the arcs of a tournament $T$ are colored with $k$ colors, there is a set $X$ of at most $g(k)$ vertices such that for every vertex $v$ of $T$, there is a monochromatic path from $X$ to $v$. We give a short proof of this statement. We moreover show that the general Sands-Sauer-Woodrow conjecture (which as a special case implies the stable marriage theorem) is valid for directed graphs with bounded stability number. This conjecture remains however open.
\end{abstract}

\section{Definition}

We interpret a quasi order on a set $S$ as a digraph, where the vertices are the elements 
of $S$ and the arcs are the couples $(x,y)$ such that $x\leq y$. 
In a digraph $D= (V,A)$, we define for every $x\in V$ the \textit{closed in-neighbourhood} 
$N^-(x)$ (resp the \textit{closed out-neighbourhood} $N^+(x)$)
as $\{x\}$ union the set of vertices $y$ such that $(y,x) \in A$ (resp $(x,y) \in A$).
By extension, $N^+(S)=\cup_{x \in S}N^+(x)$ when $S$ is a subset of vertices.

A set of vertices $S$ such that $N^+(S)=V$ is said to be \textit{dominating}. The \textit{domination number} $\gamma(D)$ of a digraph $D$
is the size of the smallest dominating set. A \textit{complete multidigraph} is a directed graph
in which multiple arcs and circuits of length two are allowed and such that there always exists
an arc between two distinct vertices (when the digraph is simple, we speak of a \textit{tournament}).
Let $T$ be a complete multidigraph whose arcs are the union of $k$ quasi orders $P_1, \dots, P_k$, 
we define $N^-_i(x)$ (resp $N^+_i(x)$) as the closed in-neighbourhood of the digraph induced by $P_i$.
For a function $f:T \rightarrow \mathbb{R}$, we note $f(T) = \sum_{x \in T}f(x)$.
We say that $f:S\rightarrow [0,1]$ is a \textit{probability distribution} on $S$ if $f(S) = 1$.
When $T\subseteq S$ and $f$ is a probability distribution on $T$ we often implicitely 
use $f$ as a probability distribution on $S$ by setting it to 0 on $S\setminus T$.

In \cite{SandSauer} Sands, Sauer and Woodrow asked the following question, also due to Erd\H{o}s:

\begin{problem}
For each $n$, is there a (least) positive integer $f(n)$ so that
every finite tournament whose edges are coloured with $n$ colours contains a
set $S$ of $f(n)$ vertices with the property that for every vertex $u$ not in $S$ there
is a monochromatic path from $u$ to a vertex of $S$?
\end{problem}

The transitive closure of each color class is a quasi-order (transitive digraph), hence the 
Erd\H{o}s-Sands-Sauer-Woodrow can be restated as:

\begin{conjecture}\label{cj:main}
For every $k$, there exists an integer $f(k)$ such that if $T$ is a complete multidigraph whose arcs are the union of $k$ quasi-orders, then $\gamma(T) \leq f(k)$.
\end{conjecture}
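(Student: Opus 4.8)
The plan is to attack the statement through its linear relaxation, isolating the one place where the quasi-order structure is genuinely needed. First I would reformulate domination as a covering problem: a set $S$ is dominating exactly when $\bigcup_{x\in S}N^+(x)=V$, where $N^+(x)=\bigcup_i N^+_i(x)$. Its fractional relaxation asks for a function $p:V\to[0,1]$ minimising $p(T)$ subject to $\sum_{x\,:\,v\in N^+(x)}p(x)\ge 1$ for every vertex $v$. The first step is to show that this fractional optimum is at most $2$, with no dependence on $k$. This follows from completeness alone: for any probability distribution $\nu$ on $V$, double counting the arcs of $T$ gives $\sum_{x}\nu(x)\,\nu(N^+(x))\ge \tfrac12$, since every ordered pair $(x,v)$ with $x\ne v$ contributes an arc in at least one of the two directions and the two directions are equiprobable under $\nu\times\nu$. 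Hence some vertex $x$ satisfies $\nu(N^+(x))\ge\tfrac12$. Feeding this into von Neumann's minimax theorem (with payoff $\mu(N^-(v))$) yields a probability distribution $\mu$ on $V$ with $\mu(N^-(v))\ge\tfrac12$ for every $v$, so $2\mu$ is a feasible fractional dominating function of weight $2$.

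What remains — and what I expect to be the entire difficulty — is to bound the integrality gap of this covering problem by a function of $k$. The naive route is the $\epsilon$-net theorem applied to the range space $\{N^-(v):v\in V\}$ under the measure $\mu$: a $\tfrac12$-net would be a dominating set of size $O(d)$, where $d$ is its VC dimension. This route is doomed, however, because that range space can have unbounded VC dimension already for $k=2$. Indeed, take $X=\{x_1,\dots,x_m\}$, make $X$ an antichain of $P_1$ and a chain of $P_2$, and for each $B\subseteq X$ adjoin a vertex $v_B$ with $x_j\le_1 v_B$ iff $x_j\in B$ and $v_B<_2 x_j$ for all $j$; this is a complete multidigraph whose arc set is the union of two quasi-orders, and $N^-(v_B)\cap X=B$, so $X$ is shattered. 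Thus no rounding that treats $T$ as an abstract set system can succeed, and transitivity must be used in an essential way.

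The promising feature to exploit is that along any chain of a single quasi-order $P_i$ the sets $N^+_i(x)$ are nested, so restricted to such a chain a colour behaves like a linear order and has trivial complexity. The plan is therefore to decompose, via a Dilworth/Mirsky argument, each $P_i$ into chains and to round the fractional solution chain by chain, where the initial-segment structure makes the local integrality gap $O(1)$. The main obstacle is that the \emph{number} of chains, i.e.\ the width of a single colour, is unbounded even when $T$ is a tournament: the example above has a colour that is an antichain of arbitrary size. Overcoming this is the heart of the matter. One must use the bound $2$ on the fractional optimum, together with the fact that the $k$ colours jointly cover all pairs, to argue that only boundedly many chains can carry the responsibility for domination, the remaining vertices being absorbed by the other colours. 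I expect the final bound $f(k)$ to emerge from iterating such a reduction over the $k$ colours, each step trading one colour for a controlled loss; and, in the spirit of the bounded stability number version announced in the abstract, to be governed by how the transitive relations interlock rather than by any quantity depending on $|V|$.
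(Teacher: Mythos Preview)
Your first step --- the Fisher--Ryan fractional bound $\mu(N^-(v))\ge\tfrac12$ for some distribution $\mu$ --- and your diagnosis that VC methods fail and that transitivity must be used essentially are both correct and match the paper's starting point (its Lemma~\ref{lem:parti} is precisely this bound pigeonholed over colours). But the second half is not a proof: you propose chain decomposition, immediately acknowledge that the width of each colour is unbounded, and then only \emph{hope} that boundedly many chains carry the responsibility. That hope is never cashed out, and this is the gap. The paper does not use chains at all.

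The missing idea is that transitivity is exploited through \emph{two nested levels of fractional domination in the same colour}. The key lemma (Lemma~\ref{lem:domi}) states: if $C\subseteq B\subseteq A$, with a distribution $w_A$ on $A$ satisfying $w_A(N^-_P(b))\ge\epsilon$ for every $b\in B$ and a distribution $w_B$ on $B$ satisfying $w_B(N^-_P(c))\ge\epsilon$ for every $c\in C$, all in a single quasi-order $P$, then $O(\epsilon^{-1}\log\epsilon^{-1})$ random points of $A$ (drawn from $w_A$) dominate all of $C$ in $P$. Reason: the sample $P$-dominates a subset of $B$ of $w_B$-measure $>1-\epsilon$, which must meet $N^-_P(c)\cap B$ for each $c\in C$, and transitivity of $P$ closes the path. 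To \emph{manufacture} such a pair of nested levels, the paper iterates the partition of Lemma~\ref{lem:parti} $k+1$ times; each resulting cell is indexed by a word $(j_1,\dots,j_{k+1})\in[k]^{k+1}$, so by pigeonhole two indices $j_i=j_l$ coincide, and the triple $T_{j_1,\dots,j_{i-1}}\supseteq T_{j_1,\dots,j_i}\supseteq T_{j_1,\dots,j_l}$ feeds into Lemma~\ref{lem:domi} with $\epsilon=1/2k$. Your chain-by-chain rounding does not lead here; the crux is not width control but the realisation that \emph{repeating} the fractional bound inside a part, and matching colours across two levels, already suffices to round.
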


In \cite{Gyarfas}, Gyárfás and Pálvölgyi asked a weaker conjecture where $T$ is a tournament instead of a complete multidigraph, which means that the (quasi)
orders are disjoint. This renewed attention on the Erd\H{o}s-Sands-Sauer-Woodrow problem
since they showed that it directly implies that every finite subset $X$ or $\mathbb R^d$ can be covered 
by $f(d)$ $X$-boxes (see \cite{Alon06}, \cite{Bara}, \cite{Pa}).

Despite some attention (see \cite{G}, \cite{GS}, \cite{Melcher}, \cite{Min}, \cite{PRRSL} for example)
the case $k=3$ was still open in both conjectures. The main goal of this paper is to prove Conjecture \ref{cj:main}. In their seminal paper, Sands, Sauer and Woodrow also proposed a more general open problem:

\begin{conjecture}\label{cj:real}
For every $k$, there exists an integer $f(k)$ such that if $D$ is a multidigraph whose arcs are the union of $k$ quasi-orders, then $D$ has a dominating set which is the union of $f(k)$ stables sets.
\end{conjecture}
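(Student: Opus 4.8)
The plan is to bound by a quantity $f(k)$ depending on $k$ alone the least number of stable sets whose union is a dominating set of $D$. It is convenient to reformulate: writing $G$ for the comparability graph of $D$ (the underlying undirected graph, whose edges join the comparable pairs), a set is stable in $D$ exactly when it is independent in $G$, so I am looking for a dominating set $S$ with $\chi(G[S])\le f(k)$, a proper colouring of $G[S]$ splitting $S$ into the required common antichains. I will attack this through its fractional relaxation, in two stages: first bound the \emph{fractional stable-domination number}, i.e.\ produce a probability distribution over stable sets under which every vertex of $D$ is dominated with probability at least $1/c(k)$; then convert this into an integral cover by $f(k)$ stable sets. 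Throughout I treat Conjecture~\ref{cj:main} as proved, so that the complete case is available as a black box.

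By covering-LP duality the fractional bound is equivalent to the following statement, which is the core of the whole argument: for every probability distribution $\mu$ on $V$ there is a single stable set $A$ with $\mu(N^+(A))\ge\tau(k)$, for some $\tau(k)>0$. The natural skeleton for proving this is induction on $k$. Fixing a colour, say $k$, let $m$ be the set of minimal elements of $P_k$; then $m$ is a $P_k$-antichain with $N_k^+(m)=V$, so $m$ already dominates all of $V$ through colour $k$. Inside $m$ the order $P_k$ is trivial, so $D[m]$ is an instance on the $k-1$ remaining orders, and one would like to push $\mu$ forward onto $m$ and extract from $D[m]$, by induction, a common antichain $A\subseteq m$ (automatically a stable set of $D$, since $m$ is a $P_k$-antichain) that captures a $\tau(k)$-fraction of the mass. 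When every residual incomparability inside $m$ has disappeared, $D[m]$ is a complete multidigraph and Conjecture~\ref{cj:main} furnishes the bounded dominating set directly, anchoring the induction.

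For the passage from the fractional to the integral bound I deliberately avoid the $\varepsilon$-net theorem. Already for a single quasi-order the out-neighbourhood system $\{N^+(x):x\in V\}$ is the family of principal up-sets, which can shatter arbitrarily large sets and so has unbounded VC dimension; generic rounding would therefore cost a factor $\log|V|$ instead of a constant. Instead I would round structurally, removing one heavy stable set at a time and arguing that the still-undominated residual does not merely shrink but comes to involve strictly fewer colours, so that the number of peeling steps is controlled by $k$. The bounded-stability case is worth isolating as a stepping stone and is strictly easier: when $\alpha(D)$ is bounded every stable set has at most $\alpha(D)$ vertices, a union of $f(k)$ of them is a set of bounded size, and it suffices to bound the ordinary domination number $\gamma(D)$ in terms of $k$ and $\alpha(D)$, which one obtains by reducing to the complete case of Conjecture~\ref{cj:main}.

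The main obstacle --- and the reason the two optimistic steps above are delicate --- is that domination does not compose across colours. The cheap dominating set $m$ is an antichain of a \emph{single} order, not a common antichain, and turning it into one by colouring $G[m]$ would require $\chi(G[m])$ to be bounded; this fails badly, since for $k-1=1$ the order $P_1$ may restrict to a chain on $m$, forcing $\chi(G[m])=|m|$. Consequently different vertices must be dominated through different colours, and the real content is to coordinate these choices globally rather than one colour at a time. My intended remedy is to replace the per-colour minimal-element step by a single weighting of $V$ that simultaneously controls all $k$ orders --- a direct descendant of the weighting that drives Conjecture~\ref{cj:main} --- and to read the antichain $A$ off its heavy vertices; making such a weighting deliver a constant $\tau(k)$ that is independent of both $\alpha(D)$ and $|V|$ is the step I expect to be hardest.
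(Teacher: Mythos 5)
Your proposal does not prove the statement, and it is worth saying plainly why: Conjecture~\ref{cj:real} is posed in the paper as an \emph{open problem}. The paper itself establishes only the special case of digraphs with bounded stability number, and does so by a one-line observation rather than a new argument: the only property of complete multidigraphs used in the proof of Theorem~\ref{th:main} is the Fisher--Ryan bound on fractional domination, and when $\alpha(D)$ is bounded the fractional domination of vertices by stable sets is at most $2$ (hence by single vertices at most $2\alpha(D)$), so Lemmas~\ref{lem:parti} and~\ref{lem:domi} run unchanged with $1/2$ replaced by $1/2\alpha(D)$. Your own text concedes the load-bearing step: the claim that for every distribution $\mu$ there is a stable set $A$ with $\mu(N^+(A))\ge\tau(k)$ is precisely the fractional form of the open conjecture, and you give no proof of it, only an induction scheme that you yourself show to fail --- and the failure is structural, not technical. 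Pushing $\mu$ onto the set $m$ of minimal elements of $P_k$ and dominating inside $D[m]$ through a colour $j\neq k$ produces, for a vertex $v$ with $u\le_k v$ and an arc of colour $j$ from $a$ to $u$, a two-coloured path from $a$ to $v$; this is not domination in any single quasi-order. That is the non-composability you correctly name, and your ``intended remedy'' (one weighting controlling all $k$ orders simultaneously) is stated as a hope, not carried out.

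The two auxiliary steps are also gapped. The fractional-to-integral ``peeling'' rests on the assertion that the undominated residual after removing a heavy stable set ``comes to involve strictly fewer colours''; nothing in your setup supports this --- the residual is merely a set of small $\mu$-mass, the measure can be renormalized onto it with all $k$ orders still active, and so the number of peeling steps is a priori unbounded. (Your VC-dimension remark correctly rules out generic $\varepsilon$-net rounding, but you replace it with an unproved claim rather than an argument.) Even the stepping stone is not right as stated: bounding $\gamma(D)$ in terms of $k$ and $\alpha(D)$ does not follow ``by reducing to the complete case'', since a digraph with bounded stability number is not a complete multidigraph and the incomparable pairs cannot in general be covered by boundedly many additional quasi-orders; the correct route, which is the paper's, is to note that completeness enters only through bounded fractional domination and to rerun the proof of Theorem~\ref{th:main} with stable sets in place of single vertices. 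In short: the bounded-stability case you isolate is true and is exactly what the paper establishes, but the general statement remains open, and your proposal contains no idea that closes it.
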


This statement (investigated in \cite{AharoniBerger}, \cite{AharoniFleiner}, \cite{Fleiner}, \cite{FleinerJanko}) is still open. We provide a proof of it when the maximal stable 
set has some fixed value.

\section{Proof}

The next result is a direct application of Farkas' lemma (or follows from the 
existence of mixed strategies in matrix games).

\begin{lemma}\label{lem:parti}
Let $T$ be a complete multidigraph whose arc set is the union of $k$ quasi-orders. There exists a 
probability distribution $w$ on $V(T)$ and a partition of $V(T)$ into sets 
$T_1, T_2, \dots, T_k$ such that for every $i$ and $x \in T_i$, $w(N^-_i(x)) \geq 1/2k$. 
\end{lemma}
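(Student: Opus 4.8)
The plan is to reduce the statement to producing a single distribution and then choosing the colors greedily. Concretely, it suffices to produce a probability distribution $w$ on $V(T)$ with the property that
$$\sum_{i=1}^k w(N^-_i(x)) \geq \tfrac12 \quad \text{for every } x \in V(T).$$
Indeed, given such a $w$, for each $x$ at least one of the $k$ nonnegative terms $w(N^-_i(x))$ is at least their average $\frac{1}{2k}$; assigning $x$ to $T_i$ for one such color $i$ yields the partition with the required bound. So the whole task becomes constructing $w$.

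To build $w$ I would set up a matrix game. Let $A$ be the $V(T)\times V(T)$ matrix with entries $A_{y,x} = |\{i : y \in N^-_i(x)\}|$, the number of quasi-orders in which $y$ lies in the closed in-neighbourhood of $x$. If the row player plays a distribution $w$ over $y$ and the column player a distribution $z$ over $x$, then against a pure column $x$ the payoff is exactly $\sum_y w(y)A_{y,x} = \sum_i w(N^-_i(x))$. By the minimax theorem (equivalently Farkas' lemma),
$$\max_w \min_x \sum_i w(N^-_i(x)) = \min_z \max_y \sum_v A_{y,v} z(v) = \min_z \max_y \sum_{i=1}^k z(N^+_i(y)),$$
where the final equality uses $y \in N^-_i(x) \iff x \in N^+_i(y)$. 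Thus it is enough to show the right-hand side is at least $\frac12$, i.e. that for every distribution $z$ there exists a vertex $y$ with $\sum_i z(N^+_i(y)) \geq \frac12$.

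The heart of the argument — and the step I expect to be the main obstacle — is this last inequality, since it is where completeness of the multidigraph must enter. I would prove it by averaging over $y$ drawn from $z$ itself:
$$\mathbb{E}_{y\sim z}\Big[\sum_{i} z(N^+_i(y))\Big] = \sum_i \sum_{y,v} z(y)z(v)\,\mathbf{1}[v \in N^+_i(y)],$$
and split this into its diagonal ($v=y$) and off-diagonal ($v \neq y$) parts. The diagonal part is nonnegative, equal to $k\sum_y z(y)^2$, because $y\in N^+_i(y)$ for all $i$. For the off-diagonal part I would group each unordered pair $\{y,v\}$ of distinct vertices: completeness guarantees an arc between them in some color, so $\sum_i(\mathbf{1}[(y,v)\in P_i]+\mathbf{1}[(v,y)\in P_i]) \geq 1$, whence the off-diagonal contribution is at least $\sum_{\{y,v\}} z(y)z(v) = \frac12\big(1-\sum_y z(y)^2\big)$. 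Adding the two parts gives an expectation of at least $\frac12$, so some $y$ attains $\sum_i z(N^+_i(y)) \geq \frac12$.

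Combining these, the value of the game is at least $\frac12$, so an optimal mixed strategy $w$ for the row player satisfies $\sum_i w(N^-_i(x)) \geq \frac12$ for every $x$, and the greedy color assignment of the first paragraph completes the proof. The only points demanding care are orienting the game on the correct (in- versus out-) neighbourhoods so that the dual quantity becomes $\sum_i z(N^+_i(y))$, and checking that the diagonal terms, being nonnegative, cannot spoil the $\frac12$ bound.
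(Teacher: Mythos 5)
Your proof is correct, and its overall skeleton --- first obtain a probability distribution $w$ with $\sum_i w(N^-_i(x)) \geq 1/2$ for every vertex $x$, then assign each $x$ to a color $i$ achieving at least the average value $1/2k$ --- is exactly the paper's. The difference lies in how that distribution is obtained. The paper simply invokes Theorem 1 of Fisher and Ryan \cite{Fisher}, which supplies $w$ with $w(N^-(x)) \geq 1/2$ for the whole multidigraph (this implies your condition $\sum_i w(N^-_i(x)) \geq 1/2$ since $N^-(x) = \cup_i N^-_i(x)$), and the pigeonhole step is then one line. You instead prove the needed fractional-domination bound from scratch: you set up the matrix game, apply the minimax theorem, and establish the lower bound of $1/2$ on the value by evaluating the column player's payoff against her own strategy, splitting into diagonal terms (nonnegative because the neighbourhoods are closed) and off-diagonal terms, which completeness bounds below by $\frac12\bigl(1 - \sum_y z(y)^2\bigr)$. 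All the steps check out, including the identity $y \in N^-_i(x) \iff x \in N^+_i(y)$ that orients the dual correctly. In effect you have written out the ``direct application of Farkas' lemma (or mixed strategies in matrix games)'' that the paper alludes to immediately before the lemma but delegates to the citation. Your version is longer but self-contained, and it has the small additional merit of handling complete multidigraphs directly, whereas Fisher--Ryan is stated for tournaments (the extension is routine --- restrict to a spanning subtournament, since closed in-neighbourhoods only grow when arcs are added back --- but your averaging argument makes that reduction unnecessary).
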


\begin{proof}
By Theorem 1 in \cite{Fisher} there exists a weight function $w: V(T) \rightarrow [0,1]$ such that $w(N^-(x)) \geq 1/2$ 
for all $x \in T$. For every $i$ in $[k]$, let $T'_i$ be the subset of vertices such that $w(N^-_i(x)) \geq 1/2k$. The sets $T'_i$ cover the vertices, so we can extract a partition with the required properties. 
\end{proof}

Let $P$ be a quasi-order on $S$. We say that $A \subseteq P$ is $\epsilon$-$dense$ in $P$ 
if there exists a probability distribution $w$ of $P$ such that $w(N^-(x)) \geq \epsilon$ for every element $x$ of $A$. 

\begin{lemma}\label{lem:domi}
Let $\epsilon$ be a real in $[0,1]$. There exists an integer $g(\epsilon)$ such that for every quasi-order $P$ on a set $A$ and two subsets $C \subset B$ of $A$ such that $B$ is $\epsilon$-dense in $P$ and $C$ is $\epsilon$-dense in $B$, there exists a set of $g(\epsilon)$ elements in $A$ dominating $C$. 
\end{lemma}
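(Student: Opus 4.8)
The plan is to dominate $C$ \emph{indirectly}: rather than hitting the down-sets $N^-(c)$ of the elements of $C$ one at a time, I will produce a single bounded set $S\subseteq A$ that dominates \emph{almost all} of $B$ in a weighted sense, and then use transitivity of the quasi-order to upgrade this to domination of \emph{all} of $C$. Fix the two witnessing distributions: let $w$ be a probability distribution on $A$ with $w(N^-(b))\ge\epsilon$ for all $b\in B$ (since $B$ is $\epsilon$-dense in $P$), and let $w'$ be a probability distribution supported on $B$ with $w'(N^-(c))\ge\epsilon$ for all $c\in C$ (since $C$ is $\epsilon$-dense in $B$, the relevant in-neighbourhoods being taken inside $B$). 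The quantity I will drive down is the $w'$-mass of the still-undominated part of $B$.

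The first step is a greedy covering of $B$ driven by an averaging argument. Suppose $B'\subseteq B$ is the current set of undominated elements with $w'(B')>0$. Averaging the inequality $w(N^-(b))\ge\epsilon$ against $w'$ restricted and normalised to $B'$ gives
\[
\sum_{a\in A} w(a)\cdot \frac{w'\bigl(\{\,b\in B' : a\le b\,\}\bigr)}{w'(B')}=\frac{1}{w'(B')}\int_{B'} w(N^-(b))\,dw'(b)\ \ge\ \epsilon,
\]
so some element $a^\ast\in A$ dominates a subset of $B'$ of $w'$-mass at least $\epsilon\, w'(B')$. Adding $a^\ast$ to $S$ and deleting the elements it dominates multiplies the leftover $w'$-mass by a factor at most $(1-\epsilon)$. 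The crucial point, and the place where the argument could go wrong, is that progress is measured in the \emph{fixed} total-mass-one distribution $w'$ rather than by counting vertices: a naive greedy set cover would only cover all of $C$ after about $\log|A|$ rounds, whereas here the geometric decay $(1-\epsilon)^t$ lets me stop after $t=O\!\bigl(\tfrac1\epsilon\log\tfrac1\epsilon\bigr)$ rounds, a bound depending on $\epsilon$ alone. I will run the greedy until the undominated set $B_{\mathrm{bad}}\subseteq B$ satisfies $w'(B_{\mathrm{bad}})<\epsilon$; this requires only $g(\epsilon)=O\!\bigl(\tfrac1\epsilon\log\tfrac1\epsilon\bigr)$ elements, which will be the final set $S$.

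The second step is the transitivity upgrade, where the threshold $w'(B_{\mathrm{bad}})<\epsilon$ is exactly matched against the density of $C$. Take any $c\in C$. By the choice of $w'$ we have $w'(N^-(c))\ge\epsilon>w'(B_{\mathrm{bad}})$, so $N^-(c)$ (inside $B$) cannot be contained in $B_{\mathrm{bad}}$; hence $c$ has a predecessor $b\in B$ with $b\le c$ that was already dominated in the first step, i.e.\ $s\le b$ for some $s\in S$. Transitivity of the quasi-order then yields $s\le c$, so $S$ dominates $c$. As $c$ was arbitrary, $S$ dominates all of $C$, completing the proof with $g(\epsilon)=O\!\bigl(\tfrac1\epsilon\log\tfrac1\epsilon\bigr)$.

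I expect the only genuine difficulty to be conceptual rather than computational: recognising that one should not try to cover the $N^-(c)$ directly (the set system of down-sets in a quasi-order has unbounded VC dimension, so no $\epsilon$-net bound is available), and instead route the domination through $B$ so that the two density hypotheses act at different levels — $w$ to make greedy progress on $B$, and $w'$ both to weight that progress and, via the strict inequality $w'(B_{\mathrm{bad}})<\epsilon$, to guarantee a surviving predecessor for every $c$. Everything else reduces to the routine averaging inequality and the geometric bound on the number of greedy steps.
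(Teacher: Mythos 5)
Your proof is correct, and its overall architecture coincides with the paper's: produce a bounded set $S\subseteq A$ whose out-neighbourhood misses less than $\epsilon$ of the $w'$-mass of $B$, then use the $\epsilon$-density of $C$ in $B$ together with transitivity to conclude that $S$ dominates all of $C$ --- your second step is exactly the paper's closing argument. The only divergence is in how $S$ is produced. The paper takes $g(\epsilon)=\lfloor \ln(\epsilon)/\ln(1-\epsilon)\rfloor+1$ elements of $A$ sampled independently at random from $w$: each $b\in B$ is then dominated with probability greater than $1-\epsilon$, and linearity of expectation gives a fixed choice of $S$ with $w'(N^+(S))>1-\epsilon$. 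You instead derandomize this step by a greedy averaging argument, peeling off at least an $\epsilon$-fraction of the remaining $w'$-mass of undominated elements of $B$ at each round; the geometric decay gives the same bound $g(\epsilon)=O\bigl(\tfrac1\epsilon\log\tfrac1\epsilon\bigr)$ (indeed the same exact expression, since the greedy stops after at most $\lfloor \ln(\epsilon)/\ln(1-\epsilon)\rfloor+1$ rounds). What each buys: your version is constructive and uses no probability, at the cost of maintaining the invariant round by round; the paper's one-shot sampling is shorter. These are the two standard dual ways of rounding a fractional cover, so nothing of substance changes, and your quantitative threshold ($w'$ of the undominated part strictly below $\epsilon$) matches the paper's exactly.
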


\begin{proof}
Let $w_A:A \rightarrow [0,1]$ and $w_B: B\rightarrow [0,1]$ be the probability distributions such that
$w_A(N^-(x)) \geq \epsilon$ for every $x \in B$ and $w_B(N^-(x)) \geq \epsilon$ for every $x \in C$. 

Let $g(\epsilon) = \lfloor \frac{ln(\epsilon)}{ln(1-\epsilon)} \rfloor + 1$ and pick independently at random according to the distribution 
$w_A$ a (multi)set $S$ of $g(\epsilon)$ elements of $A$.
For every vertex $x \in B$, $P(x \in N^+(S)) \geq 1 - (1-\epsilon)^{g(\epsilon)} > 1 - \epsilon$.
Thus, by linearity of $w_B$ and the expectation, $\mathbb{E}(w_B(N^+(S))) > 1-\epsilon$. Therefore, there exists a choice of $S$ 
such that $w_B(N^+(S)) > 1-\epsilon$. Since $w_B(N^-(y)) \geq \epsilon$ for
every $y \in C$, the set $N^-(y)$ intersects $N^+(S)$. In particular, by transitivity, $S$ dominates 
$y$.

\end{proof}
We are now ready to prove our main result: 

\begin{theorem}\label{th:main}
For every $k$, if $T$ is a complete multidigraph whose arcs are the union of $k$ 
quasi-orders, then $\gamma(T) =O(ln(2k) \cdot k^{k+2})$.
\end{theorem}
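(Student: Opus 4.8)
The plan is to set $\epsilon = 1/(2k)$ and build a rooted recursion tree whose nodes are subsets of $V(T)$, using Lemma~\ref{lem:parti} to branch and Lemma~\ref{lem:domi} to close off branches. Before starting I would record one elementary monotonicity fact about density: if $C$ is $\epsilon$-dense in a set $S$ for the quasi-order $P_i$ and $S \subseteq S'$, then $C$ is $\epsilon$-dense in $S'$ as well, since extending the witnessing distribution by $0$ outside $S$ can only enlarge each in-neighbourhood $N_i^-(x)$. This lets density pass to larger ground sets, which is exactly what is needed to line up the hypotheses of Lemma~\ref{lem:domi}.

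The root of the tree is $V(T)$. At a node $W$ I would keep track of the chain of ancestors $V(T) = A_0 \supset A_1 \supset \dots \supset A_m = W$ together with colours $c_1, \dots, c_m$ satisfying the invariant that $A_t$ is $\epsilon$-dense in $A_{t-1}$ for the quasi-order $P_{c_t}$. Applying Lemma~\ref{lem:parti} to the sub-multidigraph induced on $W$ yields a partition $W = W_1 \cup \dots \cup W_k$ with each $W_j$ being $\epsilon$-dense in $W$ for $P_j$. I then treat each part according to whether its colour is new: if $j \notin \{c_1,\dots,c_m\}$, I recurse on $W_j$ with the chain extended by $A_{m+1} = W_j$ and $c_{m+1} = j$; if instead $j = c_t$ for some $t$, the branch terminates.

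The termination step is where Lemma~\ref{lem:domi} enters. When $j = c_t$, the invariant gives that $A_t$ is $\epsilon$-dense in $A_{t-1}$ for $P_j$, while $W_j$ is $\epsilon$-dense in $W \subseteq A_t$ for $P_j$, hence $\epsilon$-dense in $A_t$ by the monotonicity fact. Taking $A = A_{t-1}$, $B = A_t$ and $C = W_j$ (note $W_j \subseteq W \subseteq A_t$) meets all the hypotheses of Lemma~\ref{lem:domi}, so $W_j$ is dominated by a set of $g(\epsilon)$ vertices of $A_{t-1} \subseteq V(T)$. Since the colours $c_1,\dots,c_m$ recorded along any root-to-node path are pairwise distinct, the recursion depth never exceeds $k$; in particular at depth $k$ every colour has already been seen, so every part of that node terminates, and by induction the terminated (leaf) sets cover $V(T)$. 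The union of the $g(\epsilon)$-element sets produced at the leaves is therefore a dominating set of $T$.

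It remains to bound its size, which I expect to be the only genuine computation. The tree has branching at most $k$ and depth at most $k$, so it has $O(k^{k+1})$ leaves; as each leaf contributes $g(\epsilon)$ vertices and $g(1/(2k)) = O(k\ln(2k))$, the total is $O(\ln(2k)\cdot k^{k+2})$, as claimed. The main obstacle, and the one point that must be checked with care, is the bookkeeping that keeps the nested hypotheses of Lemma~\ref{lem:domi} aligned: one must verify both the monotonicity of density under enlarging the ground set and that the pigeonhole on the at-most-$k$ colours indeed forces every branch to close within depth $k$, so that the leaves partition $V(T)$ and no vertex is left undominated.
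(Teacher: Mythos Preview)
Your proof is correct and follows essentially the same approach as the paper: recursively partition using Lemma~\ref{lem:parti}, then apply Lemma~\ref{lem:domi} along any branch once a colour repeats (by pigeonhole after at most $k+1$ levels). The only cosmetic differences are that you terminate a branch as soon as a colour repeats rather than always descending to full depth $k+1$, and you make explicit the monotonicity of $\epsilon$-density under enlarging the ground set, which the paper uses implicitly when it takes $C = T_{j_1,\dots,j_l}$ to be $\epsilon$-dense in $B = T_{j_1,\dots,j_i}$.
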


\begin{proof}
Consider $P_1 = T_1, T_2, \dots, T_k$ together with $w$ the partition given by Lemma \ref{lem:parti} applied to $T$.
Each of the $T_i$ is a complete multidigraph which is the union of $k$ quasi orders, this means we can apply 
Lemma \ref{lem:parti} and obtain $T_{i,1}$, $T_{i,2}$ $\dots$, $T_{i,k}$ together with a probability distribution $w_i$  on $T_i$
such that $w_i(N^-_j(x)) \geq 1/2k$ for every $x \in T_{i,j}$. 
By repeating this process $k$ times, we obtain a sequence of $k+1$ partitions $P_1, \dots, P_{k+1}$ with  
$P_i = \cup_{j_1,j_2, \dots, j_i \leq k} T_{j_1,j_2,\dots,j_i}$ such that for every $l\leq k+1$ and each $j_1, \dots, j_l$ in $[k]^l$,
$T_{j_1,j_2,\dots,j_l}$ is a subset of $T_{j_1,j_2,\dots,j_{l-1}}$ and the probability distribution $w_{j_1 \dots, j_{l-1}}$
is such that $w_{_1 \dots, j_{l-1}}(N^-_{j_l}(x)) \geq 1/2k$ for every $x$ in $T_{j_1, \dots, j_l}$.

Fix $j_1, \dots, j_{k+1}$, in $[k]^{k+1}$, by the pigeonhole principle there exists two indices, $i<l$ such that $j_i = j_l$, 
then by applying lemma \ref{lem:domi} where $T_{j_1, \dots, j_{i-1}}$, $T_{j_1, \dots, j_{i}}$ and $T_{j_1, \dots, j_{l}}$
play the roles of, respectively, $A$, $B$ and $C$ there exists a set of size $g(1/2k)$ that dominates $T_{j_1, \dots, j_{l}}$ and thus
$T_{j_1, \dots, j_{k+1}}$. This means that $\gamma(T) \leq k^{k+1}\cdot g(1/2k)$. 
Moreover since $g(1/2k) \leq  ln(2k)*(2k-1/2 + o(1))$, we have $\gamma(T) =O(ln(2k) \cdot k^{k+2})$.

\end{proof}

The main ingredient in our proof is that the fractional domination 
of complete multidigraphs is bounded. Since this also holds when the stability number
is bounded (in fact the fractional domination of vertices by stable sets is
at most 2), the general Sands-Sauer-Woodrow conjecture holds for classes
of digraphs with bounded stability.

\end{document}